\documentclass[oneside, 12pt]{amsart}
\usepackage[a4paper, top=3cm, left=2.8cm, right=2.8cm]{geometry}
\usepackage{amscd, amssymb, amsmath, mathrsfs}
\usepackage[english]{babel}
\usepackage{url}
\usepackage{tikz-cd}
\usepackage{booktabs}
\usepackage[pdftex, colorlinks=true,  citecolor=blue, linkcolor=blue, linktocpage=true]{hyperref}

\DeclareUrlCommand\arXiv{\urlstyle{same}}

\newcommand\mylabel[1]{\label{#1}\marginpar{\vspace{-1ex}\medskip\medskip\footnotesize \tt #1}}
\renewcommand\mylabel[1]{\label{#1}}
\newcommand{\mydate}{
\number\day\space
\ifcase\month \or January\or February\or March\or April\or May\or June\or July\or August\or September\or October\or November\or December\fi 
\space\number\year}

\newtheorem{theorem}{Theorem}[section]
\newtheorem{maintheorem}{Theorem}
\newtheorem{lemma}[theorem]{Lemma}
\newtheorem{proposition}[theorem]{Proposition}

\theoremstyle{definition}

\newtheorem*{acknowledgement}{Acknowledgement}

\theoremstyle{remark}

\newcommand{\ZZ}{\mathbb{Z}}

\newcommand{\ideala}{\mathfrak{a}}

\newcommand{\Frac}{\operatorname{Frac}}

\newcommand{\Kernel}{\operatorname{Ker}}

\newcommand{\dirlim}{\varinjlim}
\newcommand{\invlim}{\varprojlim}

\newcommand{\lra}{\longrightarrow}

\newcommand{\maxid}{\mathfrak{m}}

\newcommand{\primid}{\mathfrak{p}}

\newcommand{\pr}{\operatorname{pr}}

\newcommand{\quadand}{\quad\text{and}\quad}

\newcommand{\ra}{\rightarrow}

\newcommand{\Rad}{\operatorname{Rad}}

\newcommand{\Spec}{\operatorname{Spec}}

\newcommand{\topO}{\mathfrak{O}}

\newcommand{\qc}{{\text{\rm qc}}}

\newcommand{\Frk}{\operatorname{Frk}}
\newcommand{\Jac}{\operatorname{Jac}}
\newcommand{\kol}{{\text{\rm kol}}}
\newcommand{\sprl}{{\text{\rm sprl}}}
\newcommand{\frk}{{\text{\rm frk}}}
\newcommand{\dsc}{{\text{\rm dsc}}} 

\begin{document}

\title[Hochster's Theorem]
      {A simple proof for  Hochster's Theorem}

\author[Stefan Schr\"oer]{Stefan Schr\"oer}
\address{Heinrich Heine University D\"usseldorf, Faculty of Mathematics and Natural Sciences, Mathematical Institute, 40204 D\"usseldorf, Germany}
\curraddr{}
\email{schroeer@math.uni-duesseldorf.de}

\subjclass[2020]{14A05, 13A18, 54F05 }
% 14A05 Relevant commutative algebra 
% 13A18 Valuations and their generalizations for commutative rings
% 54F05 Linearly ordered topological spaces, generalized ordered spaces, and partially ordered spaces
% 54A10 Several topologies on one set (change of topology, comparison of topologies, lattices of topologies)
% 54D10 Lower separation axioms (T0-T3 , etc.)

%Possible journals:
%Geometry & Topoology: Dan Abramovic, Sandor Kovacs
%PAMS Claudiu Raicu, Gregory G. Smith

\dedicatory{16 June 2026}

\begin{abstract}
We give a   conceptual proof for Hochster's Theorem, which asserts
that each spectral space is homeomorphic to the spectrum of a ring. Given a ground field and a spectral space, our ring is constructed
as   filtered direct limit of prime-finite ring, which are attached in a functorial way to finite Kolmogoroff spaces.
The construction simplifies an argument of Ershov along these lines. Our crucial ingredient is an assembly of finite Kolmogoroff
spaces in terms of coequalizers and pushouts of  one-dimensional spaces, and Schwede's observation on prime ideals in cartesian squares of rings.
\end{abstract}

\maketitle
\tableofcontents

%===========================================================
\section*{Introduction}
\mylabel{Introduction}

For a   ring $R$, the Zariski topology $\topO$ for the space  $\Spec(R)$ of prime ideals is generated by the basic open sets
$D(f)=\{\primid\mid f\not\in \primid\}$, where $f $ ranges over the ring elements. This topology   has    two intrinsic   properties: 
(i) The family of quasicompact open sets $U_\lambda$, $\lambda\in L$ is stable under finite intersections, and forms a basis. 
(ii) Each irreducible closed set $Z$ has exactly one generic point.
Note that that first condition implies that the space  is quasicompact, by taking the empty intersection.
Also  note that  spaces satisfying the second condition are called \emph{sober spaces}.
These are instances of  \emph{Kolmogoroff spaces}, where for each pair of points $a\neq b$ there is an open set
that contains one but not the other point.

The  topological spaces  $X$ satisfying both (i) and (ii) is called   \emph{spectral spaces}.
They play an important role in the theory of locales and topoi (\cite{Johnstone 1982} and  \cite{Picado; Pultr 2012}),
and the recent monograph of Dickmann, Schwartz and Tressl
\cite{Dickmann; Schartz; Tressl 2019}   is entirely devoted to them.
Hochster's famous theorem  \cite{Hochster 1969} asserts that   \emph{each spectral space is homeomorphic to the spectrum of a ring}.
This was established by an ingenious  application of valuation theory, but  one may fairly say that Hochster's   proof is   long and complicated. Moreover,
the construction  depends on choices, and does not seem to depend on the space   in a functorial way.

The finite spectral spaces   $X$ are precisely the finite  Kolmogoroff spaces.
Somewhat surprisingly, these have     important applications in homotopy theory 
(\cite{McCord 1966}, \cite{Stong 1966}, \cite{Barmak 2011}, \cite{May; Pishevar 2026}).
Using certain cartesian squares of rings, Lewis \cite{Lewis 1973}  showed that each finite Kolmogoroff space $X$ 
is homeomorphic to the spectrum of a prime-finite ring $R$, and explicitly raises the question whether one can make $R$ functorial in $X$.
This indeed suggests a more conceptual and intrinsic  approach to Hochster's Theorem, because
the spectral spaces   are precisely the filtered inverse limits  
of finite Kolmogoroff spaces.  If one could realize
such a system $(X_\lambda)_{\lambda\in L}$  
with a filtered direct system  $(R_\mu)_{\lambda\in L}$ of prime-finite rings,  
the inverse limit of spaces  would be obtained from  the direct limit of rings.   
This idea was indeed carried out by Ershov \cite{Ershov 2005},
but the beauty and simplicity of the reasoning is somewhat buried.

The goal of this paper is to give a  simpler and clearer version of  Erschov's arguments. 
Our approach is based on assembling   finite Kolmogoroff spaces in an intrinsic  way via finite Kolmogoroff spaces
of dimension at most one, together with a beautiful  
observation of Schwede \cite{Schwede 2005} on prime ideals in cartesian square of rings.
Given a finite Kolmogoroff space $X$ and a ground field $k$, we construct the ring $R(X)$ whose spectrum recovers the space
as a certain subring 
\begin{equation}
\label{eq:inclusion in product}
R(X)\subset \prod_{x\in X} k(T_U)_{U\ni x}.
\end{equation} 
Here the factors on the right are purely transcendental field extensions, where the indeterminates $T_U$ correspond
to the open neighborhoods $U$ of the point $x$. The entries  $(P_x)_{x\in X}$ for the ring elements in  $R(X)$ belong
to certain semilocal Dedekind domains with field of fractions  $k(T_U)_{U\ni x}$, 
and their classes modulo the Jacobson radical is related to the entries $P_\sigma$, where $\sigma\in\overline{\{x\}}$,
as detailed in Section \ref{sec:functorial construction}.

I find the construction somewhat analogous to \emph{toric varieties}, where one attaches to the combinatorial datum of a fan $(N,\Delta)$,
together with the choice of a ground field $k$, a collection of rings $k[N\cap \sigma^\vee]$ indexed by the cones $\sigma\in\Delta$,
resulting in the toric variety  $ \operatorname{Temb}_N(\Delta)=\bigcup_\sigma \Spec k[N\cap \sigma^\vee]$.
Another analogy is the construction of \emph{Chevalley groups} from root data. 
In our setting,  the combinatorial datum is a finite ordered set $(|X|,\preccurlyeq)$, or equivalently a  finite Kolmogoroff space $(|X|,\topO)$,
and the output is a prime-finite  ring $R(X)$.

The kernels for the projections in \eqref{eq:inclusion in product}  define a   map $X\ra\Spec(R(X))$. 
The  construction is canonical and does not involve choices,  and our  main result it:

\begin{maintheorem}
(see Thm.\ \ref{thm:natural homeomorphism})
The maps $X\ra \Spec(R(X))$ are homeomorphisms, which are natural with respect to continuous surjections   of finite Kolmogoroff spaces.
\end{maintheorem}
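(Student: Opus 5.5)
The plan is to reduce everything to finite Kolmogoroff spaces of dimension at most one, using the assembly result for finite Kolmogoroff spaces via coequalizers and pushouts, and then to pass the homeomorphism through these colimits with Schwede's description of prime ideals in cartesian squares of rings. The functor $X\mapsto R(X)$ turns a pushout of spaces (gluing along closed subspaces) into a fibre product of rings. Coequalizers (identifying points) likewise become equalizer-type subrings of products. Schwede's result says that for a cartesian square $A=B\times_D C$ with $C\ra D$ surjective, $\Spec(A)$ is the pushout $\Spec(B)\cup_{\Spec(D)}\Spec(C)$ as topological spaces. So once the basic cases are settled and $R$ is shown to carry these colimit diagrams to such cartesian squares, the homeomorphism propagates.

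\textbf{Step 1: base cases.} For $X$ a single point, $R(X)=k$ is a field, and the claim is trivial. For $X$ of dimension one, I would take $X$ to be a specialization $\eta\rightsquigarrow\sigma$ or, more generally, a closed point $\sigma$ with several generizations. Here $R(X)$ should be the set of tuples whose generic entries lie in a semilocal Dedekind domain with fraction field $k(T_U)$, subject to residue conditions matching the entry $P_\sigma$. One then checks directly that this is a one-dimensional semilocal ring whose primes are exactly the kernels of the projections, with the correct specializations. This uses that a semilocal Dedekind domain has finitely many maximal ideals and a generic point, together with the description of the Jacobson radical quotient as a product of residue fields.

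\textbf{Step 2: compatibility with gluing.} Show that for each pushout or coequalizer diagram in the assembly lemma, the corresponding diagram of rings $R(-)$ is cartesian, with the relevant map surjective. Concretely, an element of the product in \eqref{eq:inclusion in product} lies in $R(X)$ if and only if its restrictions to the pieces lie in the rings of the pieces. Surjectivity onto the ring of the glued closed subspace needs an extension or lifting argument: an element of $R(Z)$, for $Z\subset X$ closed, must lift to $R(X)$. I would prove this by choosing entries at points outside $Z$ via the Chinese remainder theorem in the semilocal Dedekind domains, which works because the independent indeterminates $T_U$ separate the maximal ideals. Schwede then gives the bijection on spectra and the homeomorphism, with the topology controlled because all spaces are finite and closedness is detected by specialization. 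An induction on $|X|$ finishes the homeomorphism.

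\textbf{Step 3: naturality.} A continuous surjection $f\colon X\ra Y$ should induce $R(Y)\ra R(X)$, sending $(P_y)$ to $(\iota(P_{f(x)}))_x$ through the field inclusions $k(T_V)_{V\ni f(x)}\subset k(T_U)_{U\ni x}$, $T_V\mapsto T_{f^{-1}V}$. Surjectivity of $f$ is what makes $V\mapsto f^{-1}V$ injective, so that this map is a field embedding. Naturality then says that the preimage of $\ker(\pr_x)$ is $\ker(\pr_{f(x)})$, which is immediate from injectivity of the field maps.

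\textbf{Main obstacle.} I expect the main difficulty to be Step 2: the surjectivity needed for Schwede's theorem, and checking that the explicit membership conditions for $R(X)$ really form the fibre product. For this I would first handle the pushout along a single closed point and build up from there.
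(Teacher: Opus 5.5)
Your architecture (peel off generic points via the pushout of Lemma~\ref{lem:cocartesian}, form a cartesian square of rings, apply Schwede, induct) is the paper's. However, the step everything rests on is formulated in a way that fails. You claim that $R(-)$ turns the pushout into a fibre product of the rings of the pieces, and that an element of $R(Z)$, for $Z\subset X$ closed, should lift to $R(X)$. But $R(-)$ is functorial only for continuous \emph{surjections}. The inclusions $Z\ra X$ and $\Frk(\eta)\ra X$ induce no ring maps, and the intrinsic rings of the pieces live in the wrong fields. Take $X=\{\eta_1,\eta_2,\sigma\}$ with $\sigma$ in both closures, and the closed set $Z=\{\eta_2,\sigma\}$. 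The $\eta_2$-entry of $R(X)$ lies in $k(T_{\{\eta_2\}},T_{\{\eta_1,\eta_2\}},T_X)$, while that of $R(Z)$ lies in $k(T_{\{\eta_2\}},T_Z)$. The substitution $T_U\mapsto T_{U\cap Z}$ kills $T_{\{\eta_2\}}-T_{\{\eta_1,\eta_2\}}$, so there is no restriction map and no cartesian square of the kind you describe. Likewise, the ring attached to a fork piece must be $R^\eta$, built with the indeterminates of $X$, not $R(\Frk(\eta))$.

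The missing idea is to induct on \emph{relative} rings. For each closed $X_i$ of the filtration (or any closed $Z\subset X$), let $R_i\subset\prod_{x\in X_i}R^x$ be cut out by the congruences \eqref{eq:condition subring}, with the $R^x$ still built from the open sets of $X$. Since $X_{i+1}$ is closed, the congruences at its points involve only its points. Hence $R_i=R_{i+1}\times_{\prod\overline{R^\eta}}\prod R^\eta$ holds by definition, and the induction hypothesis is that $X_{i+1}\ra\Spec(R_{i+1})$ is a homeomorphism. Removing one maximal point at a time, i.e.\ inducting on $|Z|$, also works once set up this way.

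Further corrections:
\begin{itemize}
\item The surjection Schwede needs is $\prod R^\eta\ra\prod\overline{R^\eta}$, which is just reduction modulo the Jacobson radical. So your Chinese-remainder lifting, the step you flag as your main obstacle, is unnecessary; surjectivity of $R_i\ra R_{i+1}$ then follows by base change.
\item Drop the coequalizer. It yields an equalizer of rings, to which Schwede's theorem does not directly apply, and the paper's proof does not use it.
\item In Step 3 you take for granted that $f^*$ maps $R(Y)$ into $R(X)$. This is Proposition~\ref{prop:respects subrings} and needs an argument: for $f(\sigma)\neq f(x)=y$, one checks that a uniformizer $T_{V_y}$ of $R^y_{f(\sigma)}$ pulls back to the uniformizer $T_{f^{-1}(V_y)}$ of $R^x_\sigma$, and that residues pull back to residues. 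Your kernel computation for naturality is fine.
\item For a single point, $R(X)=k(T_X)$, not $k$.
\end{itemize}
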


Since each spectral space $X$ is  a filtered inverse limit of finite Kolmogoroff spaces $X_\lambda$
with surjective transition maps,   Hochster's theorem is an immediate corollary.

We take   opportunity to shed additional light on these inverse limits from a down-to-earth perspective 
(avoiding the patch topology and locales): Given an arbitrary topological space 
 $X=(|X|,\topO)$ we  consider  the spectral space $X^\sprl=\invlim X_\lambda$,
where the inverse limit runs over all \emph{finite topologies} $\topO_\lambda\subset\topO$, and the   $X_\lambda$ are the Kolmogoroffizations.
For quasicompact $X$, one can project onto  $X^\sprl_\qc=\invlim X_\mu$, where all $\topO_\mu$-open sets are $\topO$-quasicompact.
This has a certain universality property:

\begin{maintheorem}
(see Thm.\ \ref{thm:universal map})
For each quasicompact  space $X$,  the canonical map  $X\ra X^\sprl_\qc$ is universal for quasicompact continuous maps
to spectral spaces.
\end{maintheorem}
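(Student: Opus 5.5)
We have to prove the second main theorem: for a quasicompact space $X$, the canonical map $X\ra X^\sprl_\qc=\invlim X_\mu$ is universal for quasicompact continuous maps to spectral spaces. Here $\mu$ ranges over the finite subtopologies $\topO_\mu\subset\topO$ whose members are $\topO$-quasicompact, and $X_\mu$ denotes the Kolmogoroffization of $(|X|,\topO_\mu)$.

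The plan is to reduce everything to finite Kolmogoroff targets and then pass to the limit. First we check that the canonical map $\varphi\colon X\ra X^\sprl_\qc$ is continuous and quasicompact. The family $\{\mu\}$ is filtered, since the topology generated by two finite families of quasicompact opens is finite. We need its members to stay quasicompact, so we use that the relevant systems of quasicompact opens are stable under finite unions and intersections. Failing that, we restrict the index set to finite topologies that are lattices of quasicompact opens closed under intersection; since $X$ is quasicompact this makes sense, and we handle $\emptyset$ and $X$ directly. A basic quasicompact open of the limit is a preimage $\pr_\mu^{-1}(V)$ with $V\subset X_\mu$ open. Its preimage in $X$ is a member of $\topO_\mu$, hence quasicompact.

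Next, given a quasicompact continuous $f\colon X\ra Y$ with $Y$ spectral, we write $Y=\invlim Y_\nu$. Here $Y_\nu$ is the Kolmogoroffization of $Y$ for the finite topology generated by a finite set of quasicompact opens, closed under finite unions and intersections. This is the standard presentation of spectral spaces as filtered inverse limits of finite Kolmogoroff spaces, as used in the introduction; we take it from the earlier sections. The composite $X\ra Y\ra Y_\nu$ pulls each open of $Y_\nu$ back to $f^{-1}$ of a quasicompact open, which is a quasicompact open of $X$. These preimages form a finite topology $\topO_\mu$ of quasicompact sets, and $X\ra Y_\nu$ factors uniquely through $X_\mu$, since a map to a Kolmogoroff space identifies topologically indistinguishable points. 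This gives continuous maps $g_\nu\colon X^\sprl_\qc\ra X_\mu\ra Y_\nu$. They are compatible in $\nu$ because the factorizations are unique and the index sets are filtered, so they induce $g\colon X^\sprl_\qc\ra Y$ with $g\circ\varphi=f$. The map $g$ is quasicompact because the preimage of a basic quasicompact open of $Y$ is of the form $\pr_\mu^{-1}(V)$, and these are quasicompact in the spectral space $X^\sprl_\qc$; the latter holds by the earlier result that inverse limits of finite Kolmogoroff spaces are spectral with the $\pr_\mu^{-1}(V)$ as quasicompact opens.

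Uniqueness is the main obstacle. Two continuous maps $g,g'$ agreeing on $\varphi(X)$ need not agree elsewhere unless $\varphi(X)$ is dense in a suitable sense. My approach is to show that $\varphi(X)$ surjects onto each $X_\mu$, which holds by construction. Then one compares $g$ and $g'$ after composing with $Y\ra Y_\nu$. Each composite is continuous to a finite space, and since it is quasicompact it factors through some $X_{\mu'}$, because its fibre-preimages of opens are quasicompact opens of the limit and hence come from a finite stage. The two factorizations through $X_{\mu'}$ agree on the image of $X$, which is all of $X_{\mu'}$. Hence $g$ and $g'$ agree after composing with every $Y_\nu$, so $g=g'$. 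The delicate point, where quasicompactness of $g$ is essential, is the claim that every quasicompact continuous map from $\invlim X_\mu$ to a finite Kolmogoroff space factors through a finite stage. This follows because a quasicompact open of a filtered limit of finite spaces with surjective transitions is already a pullback from some stage; I would prove this by a compactness argument in the patch (constructible) topology on the limit.
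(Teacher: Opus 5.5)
Structurally your proof is the paper's. Pulling back the finite quasicompact topologies of $Y$ along $f$ is exactly the functoriality map $X^\sprl_\qc\ra Y^\sprl_\qc$ followed by the inverse of $Y\cong Y^\sprl_\qc$ (Proposition \ref{prop:spectralization spectral space}). For uniqueness the paper likewise shows that a quasicompact open of the limit is pulled back from a single stage, and then uses surjectivity of $X\ra X_\lambda$. It only reduces $Y_\nu$ further to the Sierpinski space, and it does not check, as you do, that $g$ is quasicompact.

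The weak point is your first paragraph. Restricting to finite topologies ``that are lattices of quasicompact opens closed under intersection'' changes nothing, because every member of $L_\qc$ already is one. What fails is the existence of upper bounds: if $U,U'$ are quasicompact open with $U\cap U'$ not quasicompact, then $\{\varnothing,U,X\}$ and $\{\varnothing,U',X\}$ have no common refinement in $L_\qc$. Your uniqueness step needs directedness twice: to write a quasicompact open of the limit as a single $\pr_\mu^{-1}(V)$, and to find one stage $\mu'$ serving both $g$ and $g'$. The paper's proof has the same gap, since it proves filteredness of $L_\qc$ only for spectral $X$.

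The gap is real, not merely expository. Let $X=\mathbb{N}\cup\{a,b\}$, where a subset is open if it contains cofinitely many integers whenever it contains $a$ or $b$. Then $X$, $A=\mathbb{N}\cup\{a\}$ and $B=\mathbb{N}\cup\{b\}$ are quasicompact, while $A\cap B=\mathbb{N}$ is not. An open set containing $a$ but not $b$ meets an open set containing $b$ but not $a$ in an infinite, hence non-quasicompact, subset of $\mathbb{N}$. So no $\topO_\lambda$ with $\lambda\in L_\qc$ contains sets of both kinds.

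Consequently, in every $X_\lambda$ take the class of $b$ if no member of $\topO_\lambda$ contains $b$ but not $a$, and the class of $a$ otherwise. This is a point $\omega_\lambda$ whose open neighborhoods are exactly the members of $\topO_\lambda$ containing both $a$ and $b$. The description is intrinsic, so $\omega=(\omega_\lambda)$ is a point of $X^\sprl_\qc$.

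Now set $\lambda_1=\{\varnothing,A,X\}$ and $\lambda_2=\{\varnothing,B,X\}$. The open set $V=\pr_{\lambda_1}^{-1}(A)\cup\pr_{\lambda_2}^{-1}(B)$ is quasicompact, since $X^\sprl_\qc$ is patch-closed in $\prod X_\lambda$. It does not contain $\omega$, yet its preimage in $X$ is $A\cup B=X$, the same as that of $X^\sprl_\qc$. Hence the two quasicompact maps from $X^\sprl_\qc$ to the Sierpinski space defined by $V$ and by $X^\sprl_\qc$ are different, but both extend the constant map from $X$ to the open point.

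So the theorem, and your proof of it, need the extra hypothesis that the quasicompact open subsets of $X$ are stable under finite intersections. This makes $L_\qc$ filtered, as in Proposition \ref{prop:spectralization spectral space}, and with it your argument goes through.
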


Note that for an arbitrary scheme $S$, 
the underlying topological space $X=|S|$ is spectral  if and only if   the scheme  is quasicompact and quasiseparated.
Also note  that each morphism $f:\Spec(R')\ra \Spec(R)$ of affine schemes, the underlying  continuous map   is quasicompact.
This already suggests that in the category of spectral spaces, the quasicompact continuous maps play an important role,
as already observed, for example, in \cite{Banaschewski 1996} and \cite{Schwartz 2013}.

\medskip
The paper is organized as follows: In Section \ref{sec:assembly} we analyze how finite Kolmogoroff spaces arise
via coequalizers and pushouts from finite Kolmogoroff spaces of dimension at most one.
In Section \ref{sec:functorial construction} we attach in a functorial way 
to each finite Kolmogoroff space $X$ a  prime-finite  ring $R(X)$ whose spectrum
recovers the space. In Section \ref{sec:spectralization} we discuss how spectral spaces arise as inverse limits of finite Kolmogoroff spaces,
and how Hochster's Theorem follows from this.

\begin{acknowledgement}
The research  was  also conducted       in the framework of the   research training group
\emph{GRK 2240: Algebro-Geometric Methods in Algebra, Arithmetic and Topology}.
\end{acknowledgement}

 %===========================================================
\section{Assembly for finite Kolmogoroff spaces}
\mylabel{sec:assembly}

Recall that a topological space $X$ is called \emph{Kolmogoroff} if for each pair of points $a\neq b$
there is an open set $U$ that contains one but not the other point. This is exactly the class of spaces
for which one may reconstruct the topological space   from the lattice of open sets, and thus from 
the topos of sheaves.  
Note that  the finite Kolmogoroff spaces $(|X|,\topO)$ correspond to the finite ordered sets
$(|X|,\preccurlyeq)$, where the order relation $a\preccurlyeq b$ corresponds to  $a\in \overline{\{b\}}$.
In this section we are  interested in the finite Kolmogoroff spaces $X$.

The finite Kolmogoroff spaces $X$ that are irreducible of dimension at most one take the form $X=\{\eta,\sigma_1,\ldots,\sigma_r\}$
with some $r\geq 0$, and the non-empty open sets are precisely the sets containing the generic point $\eta$.
In turn,  every finite set $X$ with a distinguished point $\eta$ can be viewed as such a Kolmogoroff space.
Let us call such spaces \emph{forks}.
Note that   singletons counts as forks, and that the \emph{pointed forks} $X^*=X\smallsetminus\{\eta\}$ are discrete.
Also note  that each fork is homeomorphic to the spectrum  of either  a suitable  semilocal Dedekind domain, or a  field.

Let $X$ be a finite Kolmogoroff space. Given a point $x\in X$, we define
$$
\Frk(x)=\overline{\{x\}}=\{x,\sigma_1,\ldots,\sigma_r\}
$$
and endow it with the topology that turns the set into  fork with generic point $x$. Note that the  canonical inclusion $\Frk(x)\ra X$ is continuous, but not necessarily an embedding.
Consider the diagram
\begin{equation}
\label{eq:coequalizer diagram}
\begin{tikzcd} 
\dot\bigcup_{x\in X}\Frk^*(x) \ar[r,shift right=0.5ex]\ar[r,shift left=0.5ex]	& \dot\bigcup_{x\in X}\Frk(x) \ar[r] &	X,  
\end{tikzcd}
\end{equation} 
where the upper arrow sends    $\sigma\in\Frk^*(x)$ to the closed point $\sigma\in\Frk(x)$,
the lower arrow   sends $\sigma$ to the generic point in $\Frk(\sigma)$,
and the  map on the right stems from the inclusion of forks.

\begin{lemma}
\mylabel{lem:coequalizer}
For each finite Kolmogoroff space $X$, the  above   is a coequalizer diagram in the category of topological spaces.
\end{lemma}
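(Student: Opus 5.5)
Write $p\colon \dot\bigcup_{x}\Frk(x)\ra X$ for the map on the right and $u,l$ for the upper and lower arrows. In the category of topological spaces, a coequalizer of $u,l$ is the set-theoretic coequalizer endowed with the quotient topology. So I would verify two things. First, $p$ induces a bijection from the set-theoretic coequalizer onto $|X|$. Second, $X$ carries the quotient topology with respect to $p$. The identity $p\circ u=p\circ l$ is immediate: both arrows send $\sigma\in\Frk^*(x)$ to a copy of the same point $\sigma\in X$. The map $p$ is continuous because each inclusion $\Frk(x)\ra X$ is continuous. This holds since an open $U\subset X$ with $U\cap\overline{\{x\}}\neq\varnothing$ must contain $x$, as open sets are stable under generization, so its trace on the fork is either empty or contains the generic point.

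\emph{Set-theoretic part.} The map $p$ is surjective, since $x$ is the image of the generic point of $\Frk(x)$. I would then show that the equivalence relation generated by $u(\sigma)\sim l(\sigma)$ coincides with the fibre relation of $p$. The fibre of $p$ over a point $\sigma\in X$ consists of the generic point of $\Frk(\sigma)$ together with the closed points $\sigma\in\Frk(x)$ for all $x$ with $\sigma\in\Frk^*(x)$. Each such closed point is identified with the generic point of $\Frk(\sigma)$ by one relation, namely $u(\sigma)\sim l(\sigma)$ with $\sigma$ viewed in $\Frk^*(x)$. Hence every fibre is a single equivalence class, and the converse inclusion is automatic from $p\circ u=p\circ l$.

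\emph{Topological part.} This is the only step with content. Let $V\subset X$ be a subset whose preimage $p^{-1}(V)$ is open. I must show that $V$ is $\topO$-open. In a finite Kolmogoroff space, the open sets are exactly the subsets stable under generization: if $\sigma\in V$ and $\sigma\preccurlyeq x$, then $x\in V$. So take $\sigma\in V$ and $x$ with $\sigma\in\overline{\{x\}}$, $\sigma\neq x$. Then $\sigma$ is a closed point of $\Frk(x)$ lying in $p^{-1}(V)\cap\Frk(x)$, which is open in the fork. Being nonempty, this open set contains the generic point $x$ of the fork, so $x\in V$. Thus $V$ is stable under generization and hence open.

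Combining the bijection with the quotient topology shows that $X$, together with $p$, satisfies the universal property of the coequalizer. The one point needing care is the characterization of opens in finite Kolmogoroff spaces as generization-stable sets. This holds because $U$ is open if and only if its complement is a finite union of point closures, that is, closed under specialization.
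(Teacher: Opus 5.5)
Your proof is correct and takes essentially the same approach as the paper. The paper checks the universal property directly. Its well-definedness of $h$ via $g_x(\sigma)=g_\sigma(\sigma)$ is your fibre computation. Its continuity step is exactly your generization argument, that a nonempty open subset of $\Frk(x)$ must contain $x$, so describing coequalizers as quotients of set-theoretic coequalizers is only a repackaging.
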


\proof
Write  $f:\dot\bigcup_{x\in X}\Frk(x)\ra X$ for the continuous mapping in question,
and let $g:\dot\bigcup_{x\in X}\Frk(x)\ra  Y$ be a continuous map  to another topological space $Y$ where the  two restrictions
to $\dot\bigcup_{x\in X}\Frk^*(x)$ coincide,  and set $g_x=g|\Frk(x)$. 
We have to show that there is a unique continuous map $h:X\ra Y$ with $g=h\circ f$. Uniqueness   follows from surjectivity of $f$.
For existence, we are forced to set $h(\sigma)=g_x(\sigma)$, where  $x\in X$ is a chosen point containing $\sigma$ in its closure. This is indeed well-defined:
Viewing $\sigma$ as a generic point in $\Frk(\sigma)$ and as closed point in   $\Frk(x)$ we see $g_x(\sigma)=g_\sigma(\sigma)$, which does not depend on $x$.

It remains to check continuity: Let $U=h^{-1}(V)$ be the preimage of an open set $V\subset Y$. Then $f^{-1}(U)$ is open,   the two intersections with
$\dot\bigcup_{x\in X}\Frk^*(x)$ coincide, and the task is to show that $U$ is open. For our finite Kolmogoroff space $X$, this means for
$U$ is stable under generization. Fix $\sigma\in U$, and let $x\in X$ be a point with $\sigma\in\overline{\{x\}}$.
Then the open set $\Frk(x)\cap f^{-1}(U)$ contains $\sigma$, hence also $x$, and thus $x\in U$.
\qed

\medskip
This expresses a given finite Kolmogoroff space $X$ in terms of such spaces of dimension at most one.
The latter immediately arise as spectra from  semilocal Dedekind domains and field products, 
which already explains our strategy to construct prime-finite rings. 
To carry  this out we need further information that allows induction on dimension:

Suppose the finite Kolmogoroff space $X$ is non-empty,   
write $X^0=\{\eta_1,\ldots,\eta_n\}$ for the open set of generic points, and  
$X'=X\smallsetminus X^0$ for the complementary closed set. The former is discrete, and the latter
has $\dim(X')<\dim(X)$. Consider the commutative diagram
\begin{equation}
\label{eq:pushout kolmogoroff}
\begin{CD}
\dot\bigcup_{\eta\in X^0}\Frk^*(\eta)	@>>>	\dot\bigcup_{\eta\in X^0}\Frk(\eta) \\
@VVV						@VVV\\
X'					@>>>	X
\end{CD}
\end{equation} 
where all maps stem from the canonical inclusions. 

\begin{lemma}
\mylabel{lem:cocartesian}
For each non-empty finite Kolmogoroff space $X$, the above diagram is cocartesian in the category of topological spaces.
\end{lemma}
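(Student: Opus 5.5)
We verify the universal property of the pushout directly, in the same way as in the proof of Lemma \ref{lem:coequalizer}. Write $A=\dot\bigcup_{\eta\in X^0}\Frk^*(\eta)$ and $B=\dot\bigcup_{\eta\in X^0}\Frk(\eta)$. Let $i:A\ra B$, $j:A\ra X'$, $p:B\ra X$ and $q:X'\ra X$ be the canonical maps. Suppose we are given a topological space $Y$ and continuous maps $g:B\ra Y$ and $g':X'\ra Y$ with $g\circ i=g'\circ j$. We must produce a unique continuous $h:X\ra Y$ with $h\circ p=g$ and $h\circ q=g'$.

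\emph{Uniqueness.} Every point of $X$ is either a generic point $\eta\in X^0$ or lies in $X'$. Hence the images of $p$ and $q$ together cover $X$, and $h$ is determined.

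\emph{Existence as a map of sets.} Set $h(\eta)=g_\eta(\eta)$ for $\eta\in X^0$, where $g_\eta=g|\Frk(\eta)$, and set $h(x)=g'(x)$ for $x\in X'$. The subsets $X^0$ and $X'$ are disjoint, so $h$ is well-defined as a map. We then check the two identities.
\begin{itemize}
\item $h\circ q=g'$ holds by definition.
\item For $h\circ p=g$: on the generic points of the forks this is the definition. A closed point $\sigma\in\Frk^*(\eta)$ satisfies $\sigma\in\overline{\{\eta\}}\smallsetminus\{\eta\}$. Since $\eta$ is a generic point of $X$, no point of $X^0$ lies in the closure of another one (points of $X^0$ are maximal for $\preccurlyeq$), so $\sigma\in X'$. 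Then $g_\eta(\sigma)=g(i(\sigma))=g'(j(\sigma))=g'(\sigma)=h(\sigma)$.
\end{itemize}

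\emph{Continuity.} This is the only substantive step. Let $V\subset Y$ be open and $U=h^{-1}(V)$. Then $p^{-1}(U)=g^{-1}(V)$ is open in $B$, and $U\cap X'=g'^{-1}(V)$ is open in $X'$. Since $X$ is a finite Kolmogoroff space, it suffices to show that $U$ is stable under generization. So take $\sigma\in U$ and $x\in X$ with $\sigma\in\overline{\{x\}}$.
\begin{itemize}
\item If $x\in X'$, then $\sigma\in X'$ as well, because $X'$ is closed. The subspace $X'$ carries the induced topology, which corresponds to the restricted order, and $U\cap X'$ is open there. Hence $U\cap X'$ is stable under generization inside $X'$, so $x\in U$.
\item If $x=\eta\in X^0$, then $\sigma\in\Frk(\eta)$. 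The set $\Frk(\eta)\cap p^{-1}(U)$ is open in the fork $\Frk(\eta)$ and contains $\sigma$. Every non-empty open subset of a fork contains its generic point, so $\eta\in p^{-1}(U)$, i.e.\ $\eta\in U$.
\end{itemize}

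The main point to handle with care is that the open sets of $X'$ in the pushout must be computed with the subspace topology. For finite spaces, this topology is exactly the one given by the restricted specialization order, which is what makes the first case of the continuity argument work.
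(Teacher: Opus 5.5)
Your proof is correct and follows essentially the same route as the paper. The paper forms the amalgamated sum, notes that the comparison map to $X$ is bijective, and then shows that a subset of $X$ whose preimages in $X'$ and in the forks are open is stable under generization, using the same two cases (generization in $X^0$ versus in $X'$). Your direct check of the universal property unpacks that argument, and your case $x\in X'$ spells out the role of the closedness of $X'$ more explicitly than the paper does.
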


\proof
Set $W=\dot\bigcup_{\eta\in X^0}\Frk(\eta)$ and $W^*=\dot\bigcup_{\eta\in X^0}\Frk^*(\eta)$, form the amalgamated sum $P=X'\cup_{W^*}W$,
and write $f:P\ra X$ for the   continuous map defined by the diagram \eqref{eq:pushout kolmogoroff}.
The upper horizontal map is an inclusion, and its complement is the set of generic points $X^0\subset X$.
We thus have a  disjoint union $P=X'\cup X^0$ of sets, and conclude that the   map $f:P\ra X$ is bijective.
It remains to verify that this continuous map is open. Let $U\subset X$ be a subset
whose preimages on  $X'$ and   $W$ are open.
Fix some $u\in U$ and let $x\in X$ be a generization.
If $x=\eta$ is a generic point we have $x\in U$ by considering the preimage on $W$. If $x$ is non-generic we likewise
have $x\in U$, by looking at the intersection with $X'$, and it follows that $U\subset X$ is open.
\qed

%===========================================================
\section{Functorial construction of prime-finite rings}
\mylabel{sec:functorial construction}

Fix a ground field $k$. The goal of this section is to construct  for each finite Kolmogoroff space $X$
a   ring $R(X)$  together with a     homeomorphisms $X\ra \Spec(R(X))$. 
The crucial point of the construction is that  the ring  must be natural in the space, at least with respect to continuous surjections.
A naive choice is  the ring of all functions $X\ra K$ with  values in some field extension $K$.
This ring is obviously functorial, but fails to reflect the topology on $X$. 
 
To achieve our goals     choose  for each open set $U$  a formal symbol $T_U$.
Given a point $x\in X$, we write $k[T_U]_{U\ni x}$ for the
polynomial ring  whose  indeterminates  correspond to the open neighborhoods  of the   point at hand.
This yields a  ring of finite type
$$
A(X) =\prod_{x\in X} k[T_U]_{U\ni x}.
$$
The construction  is functorial:
Given any continuous map  $f:X\ra Y$   of finite Kolmogoroff spaces we obtain for each $x\in X$ a ring homomorphism  
\begin{equation}
\label{eq:induced map}
A(Y)=\prod_{y\in Y} k[T_V]_{V\ni y}\stackrel{\pr_{f(x)}}{\lra} k[T_V]_{V\ni f(x)} \lra k[T_U]_{U\ni x},
\end{equation} 
where the map on the right sends the indeterminate $T_V$ to the indeterminate $T_{f^{-1}(V)}$. By the universal property of products, 
this defines the desired ring homomorphism
$$
f^*:A(Y)\lra A(X),
$$
which turn $X\mapsto A(X)$ into a functor.

Next recall that a ring element is called \emph{regular},  if the corresponding homothety is injective (the term ``non-zero divisor''
is also in frequent usage).
For the ring $A(X)$, an element $(P_x)_{x\in X}$ is regular if and only if the entries $P_x$ are  non-zero. 

\begin{proposition}
\mylabel{prop:regular to regular}
If the continuous map $f:X\ra Y$ is surjective,  the ring homomorphism $f^*:A(Y)\ra A(X)$ sends regular elements to regular elements.
\end{proposition}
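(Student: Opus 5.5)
The plan is to reduce the claim to injectivity of a single polynomial substitution homomorphism. By the description of regular elements given just before the statement, an element $P=(P_y)_{y\in Y}$ of $A(Y)$ is regular if and only if every entry $P_y$ is non-zero. The $x$-component of $f^*(P)$ is the image of $P_{f(x)}$ under the homomorphism
$$
\varphi_x: k[T_V]_{V\ni f(x)}\lra k[T_U]_{U\ni x},\quad T_V\longmapsto T_{f^{-1}(V)},
$$
as in \eqref{eq:induced map}. So it suffices to show that each $\varphi_x$ is injective. Then $P_{f(x)}\neq 0$ forces the $x$-entry of $f^*(P)$ to be non-zero, and $f^*(P)$ is regular.

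The homomorphism $\varphi_x$ sends indeterminates to indeterminates. Such a substitution between polynomial rings over $k$ is injective precisely when the assignment on indeterminates is injective: it then identifies $k[T_V]_{V\ni f(x)}$ with the polynomial subring generated by a subset of the indeterminates. If the assignment were not injective, $T_V-T_{V'}$ would lie in the kernel. Hence the key step is to show the following: if $V\neq V'$ are open neighborhoods of $f(x)$ in $Y$, then $f^{-1}(V)\neq f^{-1}(V')$.

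This is where surjectivity of $f$ enters, and it is the only real point. Since $f$ is surjective, $f(f^{-1}(V))=V$ for every subset $V\subset Y$. Consequently $f^{-1}(V)=f^{-1}(V')$ implies $V=V'$, so $V\mapsto f^{-1}(V)$ is injective on open sets. Also, each $f^{-1}(V)$ is an open neighborhood of $x$ by continuity, so the target indeterminate $T_{f^{-1}(V)}$ really lies in $k[T_U]_{U\ni x}$. I expect no genuine obstacle beyond stating cleanly that an injective map of variable sets induces an injective ring map, for instance by viewing monomials as a $k$-basis that is mapped injectively onto monomials.
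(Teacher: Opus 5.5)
Your proposal is correct and follows essentially the paper's argument. Both reduce to the substitution $T_V\mapsto T_{f^{-1}(V)}$ being injective: surjectivity of $f$ makes the preimages of distinct open neighborhoods of $f(x)$ distinct, so the image indeterminates are algebraically independent. The only difference is that the paper phrases this contrapositively, showing that a vanishing entry $P_a$ forces $Q_{f(a)}=0$.
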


\proof
Let $(Q_y)_{y\in Y}\in A(Y)$ be an elements whose image $(P_x)_{x\in X}\in A(X)$ has vanishing entry $P_a$  for some $a\in X$.
From \eqref{eq:induced map} we see that this entry depends only on $Q_b$, where $b=f(a)$. Let $V_i\subset Y$, $1\leq i\leq r$ be the open neighborhoods of $b\in Y$.
Their preimage $U_i=f^{-1}(V_i)$ are pairwise different, because $f:X\ra Y$ is surjective. 
In turn, the elements $T_{U_i}\in k[T_U]_{U\ni x}$ are algebraically independent. Since $P_a=Q_b(T_{U_1},\ldots,T_{U_r})$ vanishes, the same
holds for $Q_b=Q_b(T_{V_1},\ldots,T_{V_r})$.  So if the image  $(P_x)_{x\in X}$ is non-regular, the same holds for 
the  argument $(Q_y)_{y\in Y}$.
\qed

\medskip
Passing to localizations with respect to the multiplicative system $S(X)$ of all regular elements, we see that the formation of the field products
$$
F(X)=\prod_{x\in X} k(T_U)_{U\ni x} = S(X)^{-1}A(X)
$$
is functorial with respect to continuous surjections. Its spectrum gives back the \emph{discretization} $X^\text{dsc}$, where every
subset of the underlying topological space $|X|$   is   open.

Given $x\in X$, we consider     $\Frk(x)=\{x,\sigma_1,\ldots,\sigma_r\}$, whose underlying set is $\overline{\{x\}}$.
For each $\sigma=\sigma_i$ we chosen 
an open set $U_x$ that contains $x$ but not $\sigma$. Following Ershov (\cite{Ershov 2005}, Section 2)    
we consider inside   $k(T_U)_{U\ni x}$ the discrete valuation ring
$$
R^x_\sigma = S^{-1}_{x,\sigma} \left(k(T_U,\frac{T_{U'}}{T_{U''}}) [T_{U_x}] \right),
$$
where $U$ ranges over all open sets containing $\sigma$, while $U',U''$ run over all open sets
containing $x$ but not $\sigma$, 
and $S_{x,\sigma}$ is the complementary multiplicative system for  the maximal ideal $\maxid_{x,\sigma}=(T_{U_x})$.
From $T_{U'}=\frac{T_{U'}}{T_{U''}}\cdot T_{U_x}$ with $U''=U_x$ we see $\Frac(R^x_\sigma)=k(T_U)_{U\ni x}$.
Also note that the above  definition does not depend on the choice of  $U_x$, because for any other such   $\tilde{U}_x$ we have 
$T_{\tilde{U}_x} =\frac{T_{V'}}{T_{V''}} \cdot  T_{U_x}$, with the open sets  $V'=U_x$ and $V''=\tilde{U}_x$.
Of course, one may choose $U_x$ as the smallest open neighborhood of $x$, but the extra flexibility will be useful in what follows.
We also remark in passing that $R^x_\sigma\subset k(T_U)_{U\ni x}$ 
appear  as localizations of  some blow-up rings of $k[T_U]_{U\ni x}$. It might be interesting
to pursue this line of thought.

We now can form the   the intersection ring
$$
R^x= R^x_{\sigma_1}\cap \ldots\cap R^x_{\sigma_r}.
$$
If $x\in X$ is already a closed point, the index set is  empty, and the above has to be interpreted as the field $k(T_U)_{U\ni x}$.
Otherwise,  the intersection ring  is   semilocal, with maximal ideals $\maxid_i= \maxid_{R^x_{\sigma_i}} \cap R^x$,
having $(R^x)_{\maxid_i}=R^x_{\sigma_i}$ and thus $\Frac(R^x)=k(T_U)_{U\ni x}$, as explained in \cite{AC 5-6}, Chapter V, \S 7, No.\ 1.
One infers with the Nakayama Lemma that every ideal $\ideala\subset R^x$ is finitely generated, hence $R^x$ is a semilocal Dedekind domain,
and in particular \emph{prime-finite}, meaning that the rings contains only finitely many prime ideals.
The  quotient by the Jacobson radical takes the form
\begin{equation}
\label{eq:modulo jacobson}
\overline{R^x}= R^x/\Jac(R^x)    =\prod_{\sigma\in\Frk^*(x)}  k(T_U,\frac{T_{U'}}{T_{U''}}).
\end{equation} 
Note that in each factor on the right,
the collection of open sets $U,U',U''$ depends on the index $\sigma$, although this is not reflected by our  notation.
Also note that the  spectrum of  $\prod_{x\in X} R^x$ gives back   $X^\frk=\dot\bigcup_{x\in X}\Frk(x)$, which one may call the
\emph{forkification}.

We now exploit that \eqref{eq:modulo jacobson} contains 
$\prod_{\sigma\in\Frk^*(x)} k(T_U)_{U\ni \sigma}$ as a subring,  and therefore also 
the $\prod_{\sigma\in\Frk^*(x)} R^\sigma$. This   allows us to form the subring
$$
R(X)\subset   \prod_{x\in X} R^x
$$
of all tuples $(P_x)_{x\in X}$ such that the entries for all the non-closed $x\in X$ satisfies the congruence  condition
\begin{equation}
\label{eq:condition subring}
\overline{P_x}  =  (P_{\sigma_1},\ldots,P_{\sigma_r}), 
\end{equation} 
where the left hand side is the residue class  in  $\overline{R^x}=R^x/\Jac(R^x)$,
and the tuple on the right is indexed by the   points  $\sigma_i\in\Frk^*(x)$. The subring  keeps the desired  functoriality properties:

\begin{proposition}
\mylabel{prop:respects subrings}
For each continuous surjection $f:X\ra Y$ of finite Kolmogoroff spaces, the induced homomorphism $f^*:F(Y)\ra F(X)$ sends
the subring  $R(Y)$ to the  subring  $R(X)$.
\end{proposition}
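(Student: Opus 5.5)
Fix $Q=(Q_y)_{y\in Y}\in R(Y)$ and write $P=f^*(Q)=(P_x)_{x\in X}$, so that $P_x=\phi_x(Q_{f(x)})$. Here $\phi_x:k(T_V)_{V\ni f(x)}\ra k(T_U)_{U\ni x}$ is the field homomorphism sending $T_V\mapsto T_{f^{-1}(V)}$; it is well-defined and injective by the argument of Proposition \ref{prop:regular to regular}, since surjectivity of $f$ makes the preimages $f^{-1}(V)$ pairwise distinct. I will prove two things: first that each $P_x$ lies in $R^x$, and second that the congruence condition \eqref{eq:condition subring} holds for every non-closed $x$. The recurring tool is that continuity gives $f(\overline{\{x\}})\subset\overline{\{f(x)\}}$, so each $\sigma\in\Frk^*(x)$ satisfies either $f(\sigma)=f(x)$ or $f(\sigma)\in\Frk^*(f(x))$.

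\emph{Step 1: $\phi_x(R^{y})\subset R^x_\sigma$ for every $\sigma\in\Frk^*(x)$, where $y=f(x)$.} Set $\tau=f(\sigma)$.
\begin{itemize}
\item If $\tau=y$, then every $V\ni y$ has $f^{-1}(V)\ni\sigma$. Hence $\phi_x$ maps all of $k(T_V)_{V\ni y}$ into the subfield $k(T_U)_{U\ni\sigma}\subset R^x_\sigma$, and there is nothing more to check.
\item If $\tau\in\Frk^*(y)$, I check that $\phi_x$ carries the generators of $R^y_\tau$ into $R^x_\sigma$. An open $V\ni\tau$ pulls back to an open set containing $\sigma$. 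An open $V\ni y$ with $\tau\notin V$ pulls back to an open set containing $x$ but not $\sigma$. Therefore $\phi_x(T_V)=T_{f^{-1}(V)}$, and the ratios $T_{V'}/T_{V''}$ map to the corresponding ratios. Choosing $U_x=f^{-1}(V_y)$, which is allowed by the flexibility noted in the text, the uniformizer $T_{V_y}$ goes to $T_{U_x}$.
\item Consequently the local homomorphism condition holds: $\phi_x(\maxid_{y,\tau})\subset\maxid_{x,\sigma}$, and elements of $S_{y,\tau}$ become units. This gives $\phi_x(R^y_\tau)\subset R^x_\sigma$, so $\phi_x(R^y)\subset R^x_\sigma$.
\end{itemize}
Intersecting over all $\sigma\in\Frk^*(x)$ yields $P_x\in R^x$.

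\emph{Step 2: the congruence.} I compare residues factor by factor in \eqref{eq:modulo jacobson}. Fix $\sigma\in\Frk^*(x)$ and again set $\tau=f(\sigma)$.
\begin{itemize}
\item If $\tau\in\Frk^*(y)$, Step 1 shows $\phi_x$ induces a map of residue fields $k(T_V,T_{V'}/T_{V''})\ra k(T_U,T_{U'}/T_{U''})$ given by the same substitution. Its restriction to the subfield $k(T_V)_{V\ni\tau}$ is $\phi_\sigma$. Since $Q\in R(Y)$, the $\tau$-component of $\overline{Q_y}$ equals $Q_\tau$. Therefore the $\sigma$-component of $\overline{P_x}$ is $\phi_\sigma(Q_\tau)=P_\sigma$.
\item If $\tau=y$, the residue map restricted to $\phi_x(k(T_V)_{V\ni y})\subset k(T_U)_{U\ni\sigma}$ is the identity. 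So the $\sigma$-component of $\overline{P_x}$ is $\phi_x(Q_y)$, read as an element of $k(T_U)_{U\ni\sigma}$. This coincides with $\phi_\sigma(Q_y)=P_\sigma$, because both send $T_V\mapsto T_{f^{-1}(V)}$.
\end{itemize}

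The main obstacle is the bookkeeping in Step 1 for the case $\tau\in\Frk^*(y)$. Specifically, I must confirm that the residue field embedding of $k(T_U)_{U\ni\sigma}$ used in defining \eqref{eq:condition subring} is literally the identity on the variables $T_U$. That is what makes the substitution maps commute with the residue maps.
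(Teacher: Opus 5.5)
Your proposal is correct and follows the paper's proof: the same two steps (first $P_x\in R^x$, then the congruence), the same case split $f(\sigma)=f(x)$ versus $f(\sigma)\in\Frk^*(f(x))$, and the same choice $U_x=f^{-1}(V_y)$. You handle denominators by checking that $S_{y,\tau}$ maps into $S_{x,\sigma}$, which is slightly more careful than the paper's expansion $Q_y=\sum\beta_jT_{V_y}^j$. You also correctly identify the residue of $Q_y$ with $Q_\tau$, where the paper's ``$\beta_0=Q_\sigma$'' is a slip.
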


\proof
We first check that the homomorphism $f^*$ sends the product ring $\prod_{y\in Y}R^y$ to the product ring $\prod_{x\in X}R^x$.
Let $(P_x)_{x\in X}$ be the image of some tuple $(Q_y)_{y\in Y}$ with entries $Q_y\in R^y$.
Fix a point $x\in X$. 
The task is to verify $P_x\in R^x$. According to \eqref{eq:induced map}, the entry $P_x$ depends only on $Q_y$ for $y=f(x)$.
There is nothing to show if $x\in X$ is closed. Suppose now that $x$ is non-closed,  and fix
some  $\sigma=\sigma_i$ from $\Frk(x)=\{x,\sigma_1,\ldots,\sigma_r\}$.
Continuity ensures that  $\tau=f(\sigma)$ is  a specialization of $y$.
Recall that $Q_y$ is a rational function in the $T_V$, $V\ni y$
and $P_x$ is the corresponding rational function in the $T_{f^{-1}(V)}$.
In case $\tau=y$ we have $x\in f^{-1}(V)$, which immediately gives   $P_x\in R^x_\sigma$.
Suppose now that $\tau\neq y$,   choose an open set $V_y$ that contains $y$ but not $\tau$, and write   
$Q_y=\sum_{j=0}^n \beta_j T_{V_y}$.
The coefficients  $\beta_j$ are   rational functions
in $T_V$ and $T_{V'}/T_{V''}$, where $V$ ranges over the open neighborhoods of $\tau$,
and $V',V''$ run  over the open sets that contain $y$ but not $\sigma$. The preimages
$$
U_x=f^{-1}(V_y),\quad  U=f^{-1}(V),\quad U'=f^{-1}(V')\quadand U''=f^{-1}(U'').
$$
have the analogous properties with respect to $x,\sigma\in X$, which gives $P_x\in R^x_\sigma$.
This holds for all $\sigma=\sigma_i$, and thus $P_x\in R^x$.

It remains to show that $(P_x)_{x\in X}$ satisfies the conditions \eqref{eq:condition subring}, provided 
that  the analogous conditions hold for  $(Q_y)_{y\in Y}$. 
As in the previous paragraph  fix  $x,\sigma\in X$ and form the images $y,\tau\in Y$.
The task is to verify that the residue class $\overline{P_x}\in R^x_\sigma/\Rad(R^x_\sigma)$ coincides with $P_\sigma$.
For $\tau=y$ this is obvious,  because   $Q_y$ is a rational function in the $T_V$, $V\ni y$
and    both $P_x$ and $P_\sigma$ are the corresponding rational function in the  $T_{f^{-1}(V)}$.
Suppose now $\tau\neq y$, and write $Q_y=\sum_{j=0}^n \beta_j T_{V_y}$ as in the preceding paragraph, now with $\beta_0=Q_\sigma$.
Then $U_x=f^{-1}(V_y)$ is an open set that contains $x$ but not $\sigma$, whence $ T_{U_x}$
is a uniformizer in $R^x_\sigma$, and therefore 
$$
\overline{P_x} = \beta_0(T_{f^{-1}(V)},T_{f^{-1}(V')}/T_{f^{-1}(V'')}) = Q_\sigma(T_{f^{-1}(W)}) = P_\sigma,
$$
with $\beta_0=\beta_0(T_V,T_{V'}/T_{V''})$ and $Q_\sigma=Q_\sigma(T_W)$, $W\ni\sigma$.
\qed

\medskip
From the projections   $\pr_a:F(X)\ra k(T_U)_{U\ni a}$ we obtain  a canonical map
\begin{equation}
\label{eq:canonical map}
g_X:X\lra \Spec (R(X)),\quad a\longmapsto \Kernel(\pr_a|R(X)).
\end{equation} 
We now can formulate the main result of this paper:

\begin{theorem}
\mylabel{thm:natural homeomorphism}
For each finite Kolmogoroff space $X$, the above map is a homeomorphism. Moreover, the formation of the ring $R(X)$ is functorial
and the homeomorphism $g_X$ is natural, both  with respect to    continuous surjections $f:X\ra Y$ of finite Kolmogoroff spaces.
\end{theorem}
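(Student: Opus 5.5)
We argue by induction on $\dim(X)$, modelling the ring-side on the cocartesian square of Lemma \ref{lem:cocartesian}, and then use Schwede's observation on prime ideals in cartesian squares of rings. Functoriality of $R(X)$ is Proposition \ref{prop:respects subrings} together with functoriality of $F(X)$. Naturality of $g_X$ is then formal: for a surjection $f:X\ra Y$ and $a\in X$, the $a$-component of $f^*$ factors through $\pr_{f(a)}$ followed by an injective homomorphism of fields $k(T_V)_{V\ni f(a)}\ra k(T_U)_{U\ni a}$, as in the proof of Proposition \ref{prop:regular to regular}. Hence $(f^*)^{-1}(\Kernel(\pr_a|R(X)))=\Kernel(\pr_{f(a)}|R(Y))$. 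So the real content is that $g_X$ is a homeomorphism.

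\textbf{Base case.} If $\dim(X)\leq 0$, then $X$ is discrete, no congruence conditions occur, and $R(X)=F(X)$ is a finite product of fields. Thus $g_X$ is the obvious bijection onto $\Spec(F(X))$, which is discrete.

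\textbf{Inductive step.} Let $X^0=\{\eta_1,\ldots,\eta_n\}$ and $X'=X\smallsetminus X^0$ be as in Section \ref{sec:assembly}. Since $X'$ is closed, its open sets are the traces $U\cap X'$, and for $x\in X'$ the rings $R^x$ and the conditions \eqref{eq:condition subring} on $X'$ agree with those computed in $X$ after renaming indeterminates. Here I must check that distinct opens of $X$ containing $x$ with equal trace cause no harm; I would verify this, or else work directly with the ring $R_{X'}=\{(P_x)_{x\in X'}\}$ built from the $X$-indeterminates and note that the induction hypothesis applies verbatim to it.

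With this, $R(X)$ is the fibre product
\[
R(X)=R_{X'}\times_{\prod_{\eta}\overline{R^\eta}}\prod_{\eta\in X^0}R^\eta .
\]
The map $R_{X'}\ra\prod_\eta\overline{R^\eta}$ sends $(P_x)$ to $(P_{\sigma})_{\sigma\in\Frk^*(\eta)}$, using the inclusions $R^\sigma\subset k(T_U)_{U\ni\sigma}\subset$ the residue field at $\sigma$. The map $\prod_\eta R^\eta\ra\prod_\eta\overline{R^\eta}$ is reduction modulo the Jacobson radical, which is surjective. Schwede's result then says that $\Spec$ of this cartesian square, with one leg surjective, is a cocartesian square of topological spaces. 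The three spectra are $\Spec(R_{X'})\cong X'$ by induction, $\Spec(\prod R^\eta)=\dot\bigcup_\eta\Frk(\eta)$ since each $R^\eta$ is a semilocal Dedekind domain, and $\Spec(\prod\overline{R^\eta})=\dot\bigcup_\eta\Frk^*(\eta)$. Comparing with Lemma \ref{lem:cocartesian} gives $\Spec(R(X))\cong X$. It then remains to check that this homeomorphism is $g_X$. For that I would trace through the identification: the prime of $R(X)$ coming from $x\in X'$ is the kernel of $R(X)\ra R_{X'}\ra k(T_U)_{U\ni x}$, and the prime coming from $\eta$ is the kernel of $\pr_\eta$.

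\textbf{Main obstacle.} The delicate point is the hypothesis in Schwede's result that the prime spectrum of the fibre product is the pushout, including the compatibility of the gluing map $\Frk^*(\eta)\ra X'$ with the map $\Spec(\prod\overline{R^\eta})\ra\Spec(R_{X'})$. Concretely, I must show that the preimage in $R_{X'}$ of the maximal ideal of the factor $k(T_U,T_{U'}/T_{U''})$ indexed by $\sigma$ equals $g_{X'}(\sigma)$. This holds because that map factors through $\pr_\sigma$ followed by a field inclusion, so the preimage is the kernel of $\pr_\sigma$ on $R_{X'}$.
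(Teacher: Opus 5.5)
Your proposal is essentially the paper's own proof: functoriality from Proposition \ref{prop:respects subrings}, naturality via the field-inclusion computation (which the paper packages through the discretization square), and the homeomorphism by writing $R(X)$ as the fibre product over $\prod_\eta\overline{R^\eta}$, applying Schwede's theorem and matching with Lemma \ref{lem:cocartesian}, including the same compatibility check on the gluing maps. The subtlety you flag is genuine: an open $U\ni x$ and $U\cup\{\eta\}$ with $\eta\in X^0\smallsetminus U$ have the same trace on $X'$, so the ring on $X'$ built from $X$-indeterminates is not $R(X')$ and an induction on $\dim$ does not apply to it verbatim; the clean fix is precisely the paper's set-up, namely to fix $X$ and prove by descending induction over the strata $X=X_0\supset X_1\supset\cdots\supset X_n$ that $g_i\colon X_i\ra\Spec(R_i)$ is a homeomorphism, where $R_i\subset\prod_{x\in X_i}R^x$ is built from the $X$-indeterminates, so that the inductive hypothesis literally concerns $R_{i+1}$.
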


\proof
The formation of $R(X)$ is functorial in continuous surjections of finite Kolmogoroff spaces by Proposition \ref{prop:respects subrings}.
One easily sees that the diagram
$$
\begin{CD}
X^\dsc	@>g_X^\dsc>>	\Spec ( F(X))\\
@VVV		@VVV\\
X	@>>g_X>	\Spec( R(X))
\end{CD}
$$
is commutative, where $X^\dsc$ is the discretization of $X$, and the upper horizontal map $g_X^\dsc$ is defined in the same way as $g_X$.
The naturality of the latter is obvious. Since  $X^\dsc\ra X$ is surjective,   naturality of $g_X$ follows.

It remains to verify that $g_X$ is a homeomorphism. To this end consider the descending chain of closed 
sets $X=X_0\supset X_1\supset\ldots\supset X_n\supset X_{n+1}=\varnothing$,
where  $X_{i+1} =X_i\smallsetminus X_i^0$ is complement of the set of generic points $\eta\in X_i$,  and $n=\dim(X)$.
Consider the subrings  $R_i\subset\prod_{x\in X_i}R^x$ defined by the condition \eqref{eq:condition subring}, and the resulting maps
$$
g_i:X_i\lra \Spec(R_i),\quad a\longmapsto \Kernel(\pr_a|R_i),
$$
as in  \eqref{eq:canonical map}, which has  $g_X=g_0$. We now show by descending induction on $i\leq n$ that the $g_i$ are homeomorphisms. 
This is obvious for $i=n$, because the  space $X_n$ is discrete
and the ring $R_n$ is a  matching product of fields.  
Suppose now $i<n$, and that $g_{i+1}$ is a homeomorphism.
By definition, our rings sit in a cartesian square
$$
\begin{CD}
\prod_{\eta\in X_i^0} \overline{R^\eta}	@<<<	\prod_{\eta\in X_i^0} R^\eta\\
@AAA						@AAA\\
R_{i+1}					@<<<	R_i.
\end{CD}
$$
The upper horizontal map is surjective. We arrive at two cocartesian squares  
$$
\begin{CD}
\dot\bigcup \Frk^*(\eta)		@>>> 	\dot\bigcup  \Frk(\eta)\\
@VVV							@VVV\\
X_{i+1}						@>>>	X_i.
\end{CD}
\quadand
\begin{CD}
\Spec(\prod  \overline{R^\eta})	@>>>	\Spec(\prod  R^\eta)\\
@VVV						@VVV\\
\Spec(R_{i+1})					@>>>	\Spec(R_i)
\end{CD}
$$
of   spaces, the right by Lemma \ref{lem:cocartesian}, and the  left according to  \cite{Schwede 2005}, Theorem 3.3.
Here the unions and products run over all generic points $\eta\in X_i$.
The obvious identification 
$$
\dot\bigcup \Frk^*(\eta)=\Spec(\prod  \overline{R^\eta})\quadand \dot\bigcup  \Frk(\eta)=\Spec(\prod  R^\eta),
$$
together with our   $g_{i+1}$ and $g_i$,
define a map of squares. By our induction hypothesis, $g_{i+1}$ is a homeomorphism, and the universal property of cocartesian squares ensures
that  $g_i$ is a homeomorphism.
\qed

%===========================================================
\section{Spectralization functors and   Hochster's  Theorem}
\mylabel{sec:spectralization}

Let $X$ be a topological space. Consider the equivalence relation $x\sim x'$ given by the condition
that for every open set $U$ we have $x\in U\Leftrightarrow x'\in U$, and write $X^\kol$
for the ensuing quotient space. One easy checks that this  is a Kolmogoroff space,
and that the projection $X\ra X^\kol$ is universal for continuous maps to Kolmogoroff spaces.
The goal of this section is to construct in a related way  certain spectral spaces $X^\sprl$ and $X^\sprl_\qc$,
and examine their universal  properties.

Write  $\topO$ for  the topology on $X$,   and consider the collection 
of all coarser topologies $\topO_\lambda\subset\topO$, $\lambda\in L$ subject to  $|\topO_\lambda|<\infty$. 
We regard  $L$ as an  ordered set, where $\lambda\preccurlyeq \mu$ means $\topO_\lambda\subset\topO_\mu$.
The ordered set $L$ is filtered, because the topology generated by two topologies with only finitely many open sets
has only finitely many open sets.
The ensuing 
$X_\lambda = (|X|,\topO_\lambda)^\kol$
form a filtered  inverse system of Kolmogoroff spaces, and we define
$$
X^\sprl = \invlim_{\lambda\in L}X_\lambda.
$$
The quotient maps  $X\ra X_\lambda$ are compatible, and therefore induce a continuous map $X\ra X^\sprl$.
The construction is functorial: Given a continuous  mapping $f:X'\ra X$ we obtain   $f^*:L\ra L'$
by writing $f^{-1}(\topO_{\lambda})=\topO_{f^*(\lambda)}$, with resulting continuous mapping $f_\lambda:X'_{f^*(\lambda)}\ra X_\lambda$.
The ensuing compositions
$$
\prod_{\lambda'\in L'} X'_{\lambda'}\stackrel{\pr}{\lra} X'_{f^*(\lambda)} \stackrel{f_\lambda}{\lra} X_\lambda
$$
define a continuous map from the products of the $X'_{\lambda'}$ to the product of the $X_\lambda$, which induces
the desired map  continuous map $f^\sprl:X'^\sprl\ra X^\sprl$. One easily checks that the diagram
$$
\begin{CD}
X'		@>f>>		X\\
@VVV				@VVV\\
X'^\sprl	@>>f^\sprl>	X^\sprl
\end{CD}
$$
is commutative.

\begin{proposition}
\mylabel{prop:spectralization}
In the above situation, the   $X_\lambda$, $\lambda\in L$ are finite Kolmogoroff spaces, 
the transition maps $X_\mu\ra X_\lambda$, $\lambda\preccurlyeq \mu$ are surjective,   the inverse limit   $X^\sprl$ is a spectral space, and  
the map $X\ra X^\sprl$ has  dense image.
\end{proposition}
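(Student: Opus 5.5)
We verify the four assertions in turn; only the spectrality of the limit needs real work.

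\emph{Finiteness and surjectivity.} Each $X_\lambda=(|X|,\topO_\lambda)^\kol$ is Kolmogoroff by construction. It is finite because a point's class is determined by which of the finitely many $\topO_\lambda$-open sets contain it, so $|X_\lambda|\leq 2^{|\topO_\lambda|}$. For $\lambda\preccurlyeq\mu$, the identity $(|X|,\topO_\mu)\ra(|X|,\topO_\lambda)$ is continuous. By the universal property of Kolmogoroffization it induces a continuous map $X_\mu\ra X_\lambda$. This map is compatible with the quotient maps from $|X|$, and these quotient maps are surjective, so the transition map is surjective.

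\emph{Dense image.} A basic open set of $X^\sprl$ is the preimage $\pr_\lambda^{-1}(V)$ of an open $V\subset X_\lambda$, because the system is filtered and finite intersections can be pushed to a common index. If $V$ is non-empty, pick $x\in X$ mapping into $V$, which is possible since $X\ra X_\lambda$ is surjective. Then the image of $x$ in $X^\sprl$ lies in $\pr_\lambda^{-1}(V)$, so the image of $X$ meets every non-empty basic open set.

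\emph{Spectrality.} The main step is to show that a filtered inverse limit of finite Kolmogoroff spaces is spectral. I would do this directly, without the patch topology.

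\begin{enumerate}
\item \emph{Quasicompact open basis.} Each $\pr_\lambda^{-1}(V)$ is quasicompact. Here I use that $\invlim X_\lambda$ is a closed subset of $\prod X_\lambda$ when each $X_\lambda$ carries the discrete topology, which is a compact space by Tychonoff. The preimage $\pr_\lambda^{-1}(V)$ is closed in that compact limit, so every cover of it by basic opens (opens in the original topology are open in the discrete one) has a finite subcover.
\item \emph{Stability under intersections.} The sets $\pr_\lambda^{-1}(V)$ are stable under finite intersections: pass to a common refinement index. The empty intersection gives $X^\sprl$ itself. A set is quasicompact open exactly when it is a finite union of such basic sets, so this family is stable under finite intersections.
\item \emph{Kolmogoroff.} Two distinct points differ at some coordinate, and that coordinate space is $T_0$.
\end{enumerate}

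\emph{Sobriety.} This is the expected main obstacle. Let $Z\subset X^\sprl$ be irreducible and closed. Each $\overline{\pr_\lambda(Z)}$ is an irreducible closed subset of the finite $T_0$ space $X_\lambda$, so it has a unique generic point $z_\lambda$. Uniqueness of generic points together with continuity of the transition maps shows that the $z_\lambda$ are compatible: the image of $z_\mu$ is the generic point of $\overline{\pr_\lambda(Z)}$.

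This gives a point $z=(z_\lambda)\in X^\sprl$. To see that $z\in Z$, note that every basic neighborhood $\pr_\lambda^{-1}(V)$ of $z$ meets $Z$, because $z_\lambda\in V$ forces $V\cap\pr_\lambda(Z)\neq\varnothing$. Since $Z$ is closed, $z\in Z$. Conversely, every point of $Z$ lies in the closure of $z$, since coordinatewise each $\pr_\lambda(Z)$ lies in $\overline{\{z_\lambda\}}$. Hence $z$ is a generic point of $Z$, and it is unique by the Kolmogoroff property.
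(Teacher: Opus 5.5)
Your proof is correct and follows the paper's argument for finiteness, surjectivity of the transition maps, density, and sobriety. In the sobriety step you take the generic points $z_\lambda$ of $\overline{\pr_\lambda(Z)}$ and check that they form a compatible family generating $Z$, filling in what the paper leaves as ``one easily checks''. You also verify the first spectral axiom: the sets $\pr_\lambda^{-1}(V)$ are quasicompact and form an intersection-stable basis, which you get from compactness of $\invlim X_\lambda$ inside the product of the finite discrete spaces $X_\lambda$. The paper's proof passes over this point in silence, so your version is the more complete one.
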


\proof
By construction, the $Y=X_\lambda$ are Kolmogoroff, and have only finitely many open sets. The latter ensures that each point $y$
has a smallest open neighborhood $V_y$,   the former tells us that the map $y\mapsto V_y$ is injective, and we conclude that $Y$ is finite.
The surjectivity of the projections $X\ra X_\lambda$ ensures that the transition maps are surjective. 
Given an irreducible closed set $Z\subset X^\sprl$, the images under projection have a unique generic point $\eta_\lambda$. One easily checks
that the family $\eta=(\eta_\lambda)$ is compatible, defines a  generic point of $Z$, and that there is no other generic point.
To see density of the image we fix a point  $(x_\lambda)\in X^\sprl$. The basic open neighborhoods
take the form 
$$
V=\pr_{\mu_1}^{-1}(V_{\mu_1})\cap\ldots\cap \pr_{\mu_r}^{-1}(V_{\mu_r})
$$
for some indices  $\mu_1,\ldots,\mu_r\in L$ and some  open neighborhoods $V_{\mu_i}$  of  $x_{\mu_i}\in X_\mu$.
Passing to  some $\mu\succcurlyeq \mu_1,\ldots,\mu_r$, we are reduced to the case $r=1$, and set $\mu=\mu_1$.
Then $V=\pr_{\mu}^{-1}(V_\mu)$  contains   image points, because $X\ra X_\mu$ is surjective.
\qed

\medskip
Now let  $X$ be a quasicompact space.  We   repeat the above  construction, but    
allow  only $\topO_\lambda$ whose members $U$ are quasicompact with respect to
the original topology $\topO$. Write $L_\qc\subset L$ for the corresponding set of indices,
and define
$$
X_\qc^\sprl = \invlim_{\lambda\in L_\qc} X_\lambda.
$$
As above, this defines a spectral space. 
One easily checks that the canonical continuous map  $X\ra X^\sprl_\qc$ is  quasicompact,
and that   the  construction is functorial for   continuous maps $f:X'\ra X$ that are quasicompact.

\begin{proposition}
\mylabel{prop:spectralization spectral space}
For each spectral space $X$, the ordered set $L_\qc$ is filtered, and the canonical map   $p:X\ra X^\sprl_\qc$ is a homeomorphism.
\end{proposition}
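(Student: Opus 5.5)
Filteredness comes first. Given $\lambda,\mu\in L_\qc$, let $\topO_\nu$ be the topology generated by $\topO_\lambda\cup\topO_\mu$. Since both are finite topologies, every member of $\topO_\nu$ is a finite union of finite intersections $U\cap V$ with $U\in\topO_\lambda$ and $V\in\topO_\mu$. In a spectral space the quasicompact open sets are stable under finite intersections and finite unions, by condition (i) of the introduction. Hence every member of $\topO_\nu$ is quasicompact, $\topO_\nu$ is finite, and $\nu\in L_\qc$ dominates $\lambda$ and $\mu$. The index set is nonempty because it contains the trivial topology $\{\varnothing,X\}$; here $X$ itself is quasicompact, again by (i) applied to the empty intersection.

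For injectivity of $p$, let $a\neq b$ be points of $X$. The space $X$ is sober, hence Kolmogoroff, so some open set contains one point but not the other. Since the quasicompact opens form a basis, we may take this to be a quasicompact open $U$. The finite topology $\{\varnothing,U,X\}$ lies in $L_\qc$, and in its Kolmogoroffization the points $a$ and $b$ have distinct images. Therefore $p(a)\neq p(b)$.

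For openness onto the image, take a quasicompact open $U$. Then $U=p^{-1}(\pr_\lambda^{-1}(\bar U))$ for $\lambda$ the topology $\{\varnothing,U,X\}$ and $\bar U$ the image of $U$ in $X_\lambda$. Moreover $p(U)=p(X)\cap\pr_\lambda^{-1}(\bar U)$, because $U$ is saturated for $\sim$. Since the quasicompact opens form a basis, $p$ is an embedding.

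Surjectivity is the main step. Fix a compatible family $(x_\lambda)\in X^\sprl_\qc$. For each $\lambda$, the fibre $F_\lambda=\pr_\lambda^{-1}(x_\lambda)\subset X$ is the set of points lying in exactly those members of $\topO_\lambda$ that contain $x_\lambda$. So $F_\lambda$ is an intersection of finitely many quasicompact opens with finitely many complements of quasicompact opens. Such sets are closed in the constructible (patch) topology, which is compact. Compatibility of the family and filteredness of $L_\qc$ show that the $F_\lambda$ have the finite intersection property: given $\lambda_1,\dots,\lambda_r$, choose $\nu$ above all of them. Then $F_\nu\subset F_{\lambda_i}$ for each $i$, and $F_\nu$ is nonempty since $X\ra X_\nu$ is surjective. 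Hence $\bigcap_\lambda F_\lambda$ is nonempty, and any point in it maps to $(x_\lambda)$.

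The obstacle is that the paper deliberately avoids the patch topology. In the paper's spirit I would first try to replace the compactness argument by sobriety. Let $\mathcal F$ be the collection of quasicompact opens $U$ such that $x_\lambda\in\bar U$ for $\lambda=\{\varnothing,U,X\}$. Set $Z=X\smallsetminus\bigcup_{U\notin\mathcal F}U$, a closed set. Then show that $Z$ meets every $U\in\mathcal F$, using quasicompactness and the fact that $\mathcal F$ is a prime filter; this follows from compatibility with the topology generated by two such $U$. Next show that $Z$ is irreducible, again by primality. Its generic point $\eta$ then lies in $U$ exactly when $U\in\mathcal F$, so $p(\eta)=(x_\lambda)$. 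The delicate part is showing $Z\cap U\neq\varnothing$ for $U\in\mathcal F$. This needs the following: if $U\subset\bigcup_{V\notin\mathcal F}V$, then by quasicompactness $U$ lies in a finite union of such $V$, and that union is a quasicompact open not in $\mathcal F$ by primality. This contradicts $U\in\mathcal F$ together with upward closure of $\mathcal F$.

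Finally, a continuous bijective embedding is a homeomorphism.
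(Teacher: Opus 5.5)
Your proof is correct. Filteredness, injectivity and the embedding step run parallel to the paper's. You are slightly more careful in two places: you pass to finite unions of the $U\cap U'$ and note $L_\qc\neq\varnothing$, and you get $p(U)=p(X)\cap\pr_\lambda^{-1}(\bar U)$ before surjectivity, whereas the paper proves openness only after bijectivity.

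The genuine difference is surjectivity. Your main argument intersects the fibres $F_\lambda$, which are patch-closed, nonempty and downward directed, and concludes by compactness of the patch topology. This is short and conceptual, but it puts all the content into patch compactness of spectral spaces. That is a nontrivial theorem, and its standard direct proof (Alexander subbase lemma, hence a choice principle, plus sobriety) contains essentially the argument the paper gives. You must also cite a version not derived from Hochster's theorem or from the constructible topology on $\Spec$ of a ring, otherwise the proof becomes circular.

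The paper instead uses sobriety directly. It sets $Z=X\smallsetminus\bigcup_\lambda U_\lambda$ with $U_\lambda=p_\lambda^{-1}(X_\lambda\smallsetminus\overline{\{x_\lambda\}})$, shows $Z$ is nonempty and irreducible, and checks that its generic point maps to the given family. Your fallback via the prime filter $\mathcal F$ is exactly this argument. Indeed, $U_\lambda$ is the largest member of $\topO_\lambda$ not in $\mathcal F$, so your $Z$ coincides with the paper's.

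The filter language is the cleaner packaging. Nonemptiness, irreducibility and $p(\eta)=(x_\lambda)$ become one-line consequences of upward closure, closure under finite intersections and primality of $\mathcal F$. This replaces the paper's four-point space and its case analysis. It also avoids both the patch topology and any choice principle, in keeping with the paper's down-to-earth aims.
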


\proof
Given  $\lambda,\lambda'\in L_\qc$, let $\topO_\mu$ be the topology formed by  the intersections $U\cap U'$ 
with $U\in \topO_\lambda$ and $U'\in\topO_{\lambda'}$. These intersections are quasicompact, because $X$ is spectral,
hence $\mu\in L_\qc$, and the index set $L_\qc$ is filtered.

To proceed, we write $p_\lambda:X\ra X_\lambda$ for the quotient maps and  $p_{\mu\lambda}:X_\mu\ra X_\lambda$ for the 
transition maps. 
We first check that $p:X\ra X^\sprl_\qc$ is injective: Given points $a\neq b$, we find some quasicompact open set $U$ 
with $a\in U$ and $b\not\in U$, after swapping the points if necessary.   The  three open sets $ X,U,\varnothing$ constitute a topology  of the form $\topO_\mu$
for some $\mu\in L_\qc$, and $X_\lambda=\{\eta,\sigma\}$ can be identified with the  \emph{Sierpinski space}.
Then $p_\mu(a)=\eta\neq \sigma = p_\mu(b)$, hence $p$ is injective.

The main task is to verify that $p:X\ra X^\sprl_\qc$ surjective: Let $(z_\lambda)_{\lambda\in L_\qc}$ be a point from $X^\sprl_\qc$.
Then the  irreducible closed sets $Z_\lambda=\overline{\{z_\lambda\}}$ satisfy 
\begin{equation}
\label{eq:transition irreducible sets}
p_{\mu\lambda}(Z_\mu)\subset Z_\lambda\quadand p_{\mu\lambda}^{-1}(X_\lambda\smallsetminus Z_\lambda)\subset X_\mu\smallsetminus Z_\mu.
\end{equation} 
Consider the    quasicompact open sets $U_\lambda=p_\lambda^{-1}(X_\lambda\smallsetminus Z_\lambda)$ 
and the   closed set $Z=X\smallsetminus \bigcup U_\lambda$. 
We first observe that $Z$ is non-empty: Otherwise $X$ is covered by the $U_\lambda$,
and quasicompactness gives $X=U_{\lambda_1}\cup\ldots\cup U_{\lambda_r}$.
Using that $L_\qc$ is filtered together with \eqref{eq:transition irreducible sets},  we are reduced to the case $r=1$. Setting $\mu=\lambda_1$,
we see that $X$ equals the preimage of $X_\mu\smallsetminus Z_\mu$, in contradiction to the surjectivity of $X\ra X_\mu$.

We next check that $Z$ is irreducible: Otherwise there are two quasicompact open sets $V,V'\subset X$, both intersecting $Z$
but having  $V\cap V'\subset U$.  
The open sets $X,V,V',\varnothing$ constitute a topology  of the form $\topO_\lambda$ for some $\lambda\in L_\qc$,
and the resulting $X_\lambda=\{\eta,\zeta,\zeta',\sigma\}$ has
$$
p_\lambda^{-1}(\{\eta,\zeta\})=V\quadand p_\lambda^{-1}(\{\eta,\zeta'\})=V'\quadand p^{-1}_\lambda(\{\eta\})=V\cap V'.
$$
From this we infer  $z_\lambda=\eta$.
Using that $V\cap V'$ is quasicompact, we find some $\mu\succcurlyeq\lambda$ in $ L_\qc$ such that $V\cap V'\subset U_\mu$. Choose $x\in X$ with $p_\mu(x)=z_\mu$.
Then $x$ does not belong to $U_\mu=X\smallsetminus p_\mu^{-1}(Z_\mu)$,   therefore $x\not\in V\cap V'$, 
consequently $p_\lambda(x)\neq \eta =z_\lambda=p_{\mu\lambda}(z_\mu) =p_{\mu\lambda}(p_\mu(x))= p_\lambda(x)$, contradiction.

Since $X$ is spectral, the irreducible closed set $Z$ has a unique generic point $z$, 
and we indeed have  $p(z)=(z_\lambda)_{\lambda\in L_\qc}$.
To see this, fix an index $\lambda$.  Since $X_\lambda$ embeds into a finite product of Sierpinski spaces,
it suffices to treat the case that  $X_\lambda=\{\eta,\sigma\}$ equals the Sierpinski space, where the map $p_\lambda$ 
is already determined by the quasicompact open set $W=p_\lambda^{-1}(\eta)$. Choose $x\in X$ with $p_\lambda(x)=z_\lambda$.
If $x,z\in W$ or $x,z\not\in W$ we have $p_\lambda(z)=p_\lambda(x)=z_\lambda$. 
The case $z\in W, x\not\in W$ is impossible, because then $z_\lambda=p(x)=\sigma$ and thus  $W= p_\lambda^{-1}(X\smallsetminus\{z_\lambda\}) =U_\lambda\subset U$,
in contradiction to $z\in W$. It remains to rule out $z\not\in W, x\in W$, when $p_\lambda(x)=\eta$.
Using that $W$ is quasicompact, we find some $\mu\succcurlyeq\lambda$ in $ L_\qc$ such that $W\subset U_\mu$.
Choose $x'\in X$ with $p_\mu(x')=z_\mu$. Obviously, $x'$ is not contained in $U_\lambda=p_\lambda^{-1}(X_\lambda\smallsetminus Z_\lambda)$
and in particular not in $W$, and thus $p_\lambda(x')=\sigma$. On the other hand, we have
$p_\lambda(x')= p_{\mu\lambda}(p_\mu(x')) = p_{\mu\lambda}(z_\mu)=z_\lambda =p_\lambda(x)=\eta$,
contradiction.

Summing up, $p:X\ra X^\sprl_\qc$ is a continuous bijection, and it remains to check that  for each open $U\subset X$,
the image   is open. It suffices to treat the case that $U$ is quasicompact, and furthermore $U\neq X,\varnothing$. 
Then $X,U,\varnothing$ constitute a topology
of the form $\topO_\lambda$ for some $\lambda\in L_\qc$,     the  resulting $X_\lambda=\{\eta,\sigma\}$ is a copy of the Sierpinski space,
and $U=p_\lambda^{-1}(\eta)$.
Surjectivity of $p$ gives  $ p(p_\lambda^{-1}(\eta))=p_\lambda^{-1}(\eta)$, which is open.
\qed

\medskip
In different form, the  following universality property already appears in  
\cite{Banaschewski 1996}, Proposition 4 and  \cite{Schwartz 2013}, Theorem 5.1:

\begin{theorem}
\mylabel{thm:universal map}
For each quasicompact space $X$, the map   $X\ra X^\sprl_\qc$ is universal for  quasicompact continuous maps to spectral spaces.
\end{theorem}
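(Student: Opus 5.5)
We have to show: for each quasicompact continuous map $h:X\ra Y$ to a spectral space $Y$ there is a unique quasicompact continuous map $\bar h:X^\sprl_\qc\ra Y$ with $h=\bar h\circ p_X$, where $p_X:X\ra X^\sprl_\qc$ is the canonical map. The plan is to obtain $\bar h$ from functoriality and then apply Proposition \ref{prop:spectralization spectral space} to $Y$.

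\emph{Existence.} The construction $X\mapsto X^\sprl_\qc$ is functorial for quasicompact continuous maps. So $h$ induces a continuous map $h^\sprl_\qc:X^\sprl_\qc\ra Y^\sprl_\qc$, and the square
$$
h^\sprl_\qc\circ p_X = p_Y\circ h
$$
commutes. Here the index map $h^*:L_\qc(Y)\ra L_\qc(X)$ sends $\topO_\lambda$ to $h^{-1}(\topO_\lambda)$. Its members are quasicompact because $h$ is quasicompact, so $h^*$ lands in $L_\qc(X)$. By Proposition \ref{prop:spectralization spectral space}, $p_Y$ is a homeomorphism, and we set
$$
\bar h=p_Y^{-1}\circ h^\sprl_\qc .
$$
Then $\bar h\circ p_X=h$.

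We must also check that $\bar h$ is quasicompact. It suffices to show that the preimage of a quasicompact open $V\subset Y$ is quasicompact. The set $p_Y(V)$ is the preimage under the projection $Y^\sprl_\qc\ra Y_\lambda$ of a point set, where $Y_\lambda$ is the Sierpinski quotient attached to $\{Y,V,\varnothing\}$. Its preimage in $X^\sprl_\qc$ is therefore the preimage under a single projection $X^\sprl_\qc\ra X_\mu$ of an open subset of the finite space $X_\mu$, with $\mu=h^*(\lambda)$. Such sets are quasicompact in the spectral space $X^\sprl_\qc$: they are standard basic open sets of an inverse limit of finite spaces, and compactness of the patch/product topology gives quasicompactness. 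If needed, we argue directly: an open cover of $\pr_\mu^{-1}(W)$ by basic sets $\pr_{\nu}^{-1}(W_\nu)$ with no finite subcover gives, by a K\"onig-type compactness argument on the inverse system of finite nonempty sets, a point not covered.

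\emph{Uniqueness.} This is the step I expect to be the main obstacle, since $p_X$ merely has dense image and $Y$ is not Hausdorff. Let $g,g':X^\sprl_\qc\ra Y$ be quasicompact continuous maps with $g\circ p_X=g'\circ p_X=h$. Because $Y$ is Kolmogoroff with a basis of quasicompact opens $V$, it suffices to show that
$$
g^{-1}(V)=g'^{-1}(V)
$$
for every quasicompact open $V\subset Y$. Both sides are quasicompact open subsets of $X^\sprl_\qc$ with the same preimage $h^{-1}(V)$ in $X$. So the key claim is that a quasicompact open set $W\subset X^\sprl_\qc$ is determined by $p_X^{-1}(W)$. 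For this, write $W$ as a finite union of basic sets $\pr_{\mu_i}^{-1}(W_i)$, and pass to a common index $\mu\in L_\qc$ using that $L_\qc$ is filtered. Then $W=\pr_\mu^{-1}(W_\mu)$ for an open $W_\mu\subset X_\mu$. Since $X\ra X_\mu$ is surjective, $W_\mu$ is the image of $p_X^{-1}(W)$, and this recovers $W$. Applying this to $g^{-1}(V)$ and $g'^{-1}(V)$ gives $g=g'$.

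One point needs care in this plan: $L_\qc$ must be filtered for a general quasicompact $X$. Intersections of quasicompact opens need not be quasicompact, but unions are. I would therefore combine two indices by taking the topology generated by finite unions of the given sets, or restrict to a cofinal filtered subsystem, checking that this does not change the limit. This part may require adjustment.
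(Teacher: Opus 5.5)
Your proposal follows the paper's proof step by step. Existence comes from the functoriality square together with Proposition \ref{prop:spectralization spectral space} applied to $Y$. Uniqueness is obtained by reducing to quasicompact opens of $Y$ and showing that a quasicompact open of $X^\sprl_\qc$ is a single $\pr_\mu^{-1}(W_\mu)$, hence determined by its preimage in $X$. Your patch-compactness check that $\bar h$ is quasicompact is a useful addition that the paper omits, and it needs no filteredness.

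The issue you flag at the end is a genuine gap. The paper's uniqueness step has it too (``choose some $\lambda\in L_\qc$ so that $\topO_\lambda$ is finer than all \ldots''). Neither of your repairs can work. Any upper bound of $\topO_\lambda$ and $\topO_{\lambda'}$ in $L_\qc$ is a topology, so it contains every $U\cap U'$. And when two indices have no upper bound at all, no cofinal filtered subsystem exists.

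In fact uniqueness fails. Let $X=\mathbb{N}\cup\{a,b\}$, where $V\subset X$ is open iff $V\subset\mathbb{N}$, or $V$ meets $\{a,b\}$ and $V\cap\mathbb{N}$ is cofinite. Then $X$, $U=\mathbb{N}\cup\{a\}$ and $U'=\mathbb{N}\cup\{b\}$ are quasicompact. However, an open set meeting $\{a,b\}$ exactly in $\{a\}$ and one meeting it exactly in $\{b\}$ intersect in an infinite discrete, hence non-quasicompact, subset of $\mathbb{N}$. So $\topO_\lambda=\{\varnothing,U,X\}$ and $\topO_{\lambda'}=\{\varnothing,U',X\}$ lie in $L_\qc$, and no $\topO_\mu\in L_\qc$ contains sets of both kinds.

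Put $z_\mu=p_\mu(b)$ if $\topO_\mu$ contains a set of the first kind, and $z_\mu=p_\mu(a)$ otherwise. Since $p_\mu(a)=p_\mu(b)$ whenever $\topO_\mu$ contains sets of neither kind, this family is compatible. It therefore defines a point $z\in X^\sprl_\qc$ with $z_\lambda=p_\lambda(b)$ and $z_{\lambda'}=p_{\lambda'}(a)$.

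Now let $W=\pr_\lambda^{-1}(p_\lambda(U))\cup\pr_{\lambda'}^{-1}(p_{\lambda'}(U'))$. By your own patch argument $W$ is quasicompact open, and $p_X^{-1}(W)=U\cup U'=X$, but $z\notin W$. So $W$ and $X^\sprl_\qc$ define two different quasicompact maps to the Sierpinski space that agree on $X$. Likewise $p_X$ is not quasicompact: the quasicompact open $\pr_\lambda^{-1}(p_\lambda(U))\cap\pr_{\lambda'}^{-1}(p_{\lambda'}(U'))$ pulls back to $\mathbb{N}$. The same reasoning shows that this $X$ admits no universal quasicompact map to a spectral space at all.

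The missing hypothesis is that $X$ be quasiseparated, i.e.\ intersections of two quasicompact open sets are quasicompact. Then the topology generated by $\topO_\lambda\cup\topO_{\lambda'}$ consists of finite unions of sets $U\cap U'$ and lies in $L_\qc$. With this hypothesis added, your argument goes through exactly as written.
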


\proof
Let $Y$ be a spectral space and  $f:X\ra Y$ be  quasicompact and continuous. We then have a commutative diagram
$$
\begin{tikzcd} 
X\ar[r]\ar[d,"f"']     	 &   X^\sprl\ar[d]\ar[r,dashed]	& X^\sprl_\qc\ar[d,dashed]\\
Y\ar[r]                  &   Y^\sprl\ar[r]		& Y^\sprl_\qc
\end{tikzcd}
$$
The desired factorization of $f$ over $X^\sprl_\qc$ is given applying  the vertical map on the right, followed
by  the inverse of the lower composition. 
Note that in the diagram, the undashed arrows exists in general, while the dashed arrows rely on quasicompactness of 
the space  $X$ and the map $f:X\ra Y$.

It remains to verify uniqueness. In light of the identification $Y=\invlim Y_\lambda$ it suffices to treat the case
that $Y$ is finite. Since a finite Kolmogoroff spaces sits inside a finite product of the Sierpinski space $S=\{\eta,\sigma\}$,
we may assume that $Y$ itself equals the Sierpinski space. Now continuous maps to $Y$ can be identified with open sets  in the domain,
and our task is to show that two quasicompact open sets $V,W\subset X^\sprl_\qc$ whose preimages 
on $X$ coincide already coincide.
Write 
\begin{equation}
\label{eq:quasicompact product topology}
V=\bigcup_{i=1}^r\pr_{\alpha_i}^{-1}(V_i)\quadand W=\bigcup_{j=1}^s\pr_{\beta_j}^{-1}(W_j)
\end{equation} 
for certain indices $\alpha_i,\beta_j\in L_\qc$ and open sets $V_i\subset X_{\alpha_i}$ and $W_j\subset X_{\beta_j}$.
Choose some $\lambda\in L_\qc$ so that $\topO_\lambda$ is finer than all the $\topO_{\alpha_i}$ and $\topO_{\beta_j}$.
Then the common preimage $U\subset X$ of \eqref{eq:quasicompact product topology} belongs to $\topO_\lambda$, and thus corresponds to an 
 open set $U_\lambda\subset X_\lambda$. 
By construction, both open sets in \eqref{eq:quasicompact product topology} coincide with $\pr^{-1}_\lambda(U_\lambda)$.
\qed

\medskip
Now fix a ground field $k$.  For each spectral space $X$, the  filtered inverse system $(X_\lambda)_{\lambda\in L_\qc}$
of finite Kolmogoroff spaces with surjective transition maps yields a filtered direct system $(R_\lambda)_{\lambda\in L_\qc}$
of prime-finite rings $R_\lambda=R(X_\lambda)$,  coming with compatible  homeomorphisms
$X_\lambda\ra \Spec(R_\lambda)$, as constructed in Section \ref{sec:functorial construction}.
We thus get continuous maps
$$
X\lra   \invlim X_\lambda \lra \invlim \Spec (R_\lambda) \lra  \Spec(\dirlim R_\lambda).
$$
where the indices run over $L_\qc$. The map in the middle is a homeomorphism (Theorem \ref{thm:natural homeomorphism}). The same holds for
the map on the right (\cite{EGA IVc}, Corollary 8.2.10) and the map on the left (Proposition \ref{prop:spectralization spectral space}).  
This gives  the desired  simple proof of Hochster's theorem (\cite{Hochster 1969}, compare also   \cite{Dickmann; Schartz; Tressl 2019}, Section 12.6 and \cite{Ershov 2005}):

\begin{theorem}
\mylabel{thm:hochster}
Each spectral space  is homeomorphic to the spectrum of a ring.
\end{theorem}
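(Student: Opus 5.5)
The plan is to assemble the chain of continuous maps displayed just before the statement and check that each arrow is a homeomorphism. Let $X$ be a spectral space and index over $L_\qc$. By Proposition \ref{prop:spectralization spectral space}, $L_\qc$ is filtered and the canonical map $p:X\ra X^\sprl_\qc=\invlim X_\lambda$ is a homeomorphism. By Proposition \ref{prop:spectralization} each $X_\lambda$ is a finite Kolmogoroff space and the transition maps $X_\mu\ra X_\lambda$ are continuous surjections. The argument there uses only the projections $X\ra X_\lambda$, so it applies verbatim to the subsystem $L_\qc$.

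Next I will feed this inverse system into the construction of Section \ref{sec:functorial construction}. Since the transition maps are continuous surjections, Theorem \ref{thm:natural homeomorphism} gives a filtered direct system of rings $R_\lambda=R(X_\lambda)$ with transition homomorphisms $f_{\mu\lambda}^*:R_\lambda\ra R_\mu$. It also gives homeomorphisms $g_{X_\lambda}:X_\lambda\ra\Spec(R_\lambda)$, and their naturality says exactly that these form a morphism of inverse systems of topological spaces. A compatible family of homeomorphisms induces a homeomorphism $\invlim X_\lambda\ra\invlim\Spec(R_\lambda)$, because the inverse family is also compatible.

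Then set $R=\dirlim R_\lambda$. The canonical map $\Spec(R)\ra\invlim\Spec(R_\lambda)$ is a homeomorphism by \cite{EGA IVc}, Corollary 8.2.10. One could also argue directly: a prime of $R$ is the same as a compatible family of primes $\primid_\lambda\subset R_\lambda$, and the basic open sets $D(f)$ of $R$ come from the $D(f_\lambda)$ at finite stages. Composing $p$, the middle homeomorphism and the inverse of this last one gives a homeomorphism $X\ra\Spec(R)$, which proves the statement.

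I expect no real obstacle, since each step invokes an earlier result. The point needing the most care is that the functoriality in Theorem \ref{thm:natural homeomorphism} is only for continuous surjections. So I must make sure all transition maps of the system indexed by $L_\qc$ are surjective, which holds because $X\ra X_\lambda$ is surjective. I must also check the variance: $R(-)$ is contravariant, so the inverse system of spaces becomes a direct system of rings, and the homeomorphisms $g$ commute with the induced maps on spectra.
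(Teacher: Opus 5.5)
Your proposal is correct and follows the paper's own argument: over $L_\qc$ it chains $X\to\invlim X_\lambda\to\invlim\Spec(R_\lambda)$ with the homeomorphism $\Spec(\dirlim R_\lambda)\to\invlim\Spec(R_\lambda)$, using Proposition \ref{prop:spectralization spectral space}, Theorem \ref{thm:natural homeomorphism} and EGA IV, Corollary 8.2.10, exactly as the paper does. The checks you spell out (surjective transition maps, contravariance of $R(-)$, and that a compatible family of homeomorphisms induces a homeomorphism of limits) are the points the paper leaves implicit.
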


\medskip
Note that the ring $R(X)=\dirlim R(X_\lambda)$ is canonically constructed out of nothing but the spectral space $X$ and the   ground field $k$.
One may wonder whether  there are any ties with reconstructions of algebraic schemes from the Zariski topology 
\cite{Kollar; Lieblich; Olsson; Sawin 2023},
or what happens if one works  over other ground rings rather than ground fields, in particular with the ring of integers $\ZZ$.

%===========================================================

\end{document}